\numberwithin{equation}{section}
\newtheorem{theorem}{Theorem}[section]
\newtheorem{corollary}{Corollary}[section]
\newtheorem{definition}{Definition}[section]
\newtheorem{example}{Example}[section]
\newtheorem{lemma}{Lemma}[section]
\newenvironment{proof}{{\bf Proof.}}{{\hspace*{\fill}$\Box$}}
\title{New averaged type algorithms for solving split common fixed-point problem for demicontractive mappings}
\author[1,2]{Vasile Berinde}
\author[3]{Khairul Saleh}
\affil[1]{\footnotesize Department of Mathematics and Computer Science Technical University of Cluj-Napoca, North University Centre at Baia Mare, Victoriei 76, 430122 Baia Mare, Romania}
\affil[2]{\footnotesize Academy of Romanian Scientists, 3 Ilfov, 050044, Bucharest, Romania}
\affil[3]{\footnotesize Department of Mathematics, King Fahd University of Petroleum
and Minerals, Dhahran 31261, Saudi Arabia}
\date{}
\begin{document}
\maketitle
\begin{abstract}
In this paper we propose new averaged iterative algorithms designed for solving a split common fixed-point problem in the class of demicontractive mappings. The algorithms are obtained by inserting an  averaged term into the algorithms used in [Li, R. and He, Z., A new iterative algorithm for split solution problems of quasi-nonexpansive mappings {\it J. Inequal. Appl.} {\bf 131} (2015), 1--12.] for solving the same problem but in the class of quasi-nonexpansive mappings, which is a subclass of demicontractive mappings. Basically, our investigation is based on the embedding of demicontractive operators in the class of quasi-nonexpansive operators by means of averaged mappings. For the considered algorithms we prove weak and strong convergence theorems  in the setting of a real Hilbert space and also provide examples to show that our results are effective generalizations of existing results in literature.

\end{abstract}

\medskip
\noindent Keywords: Hilbert space, demicontractive mapping, split common fixed point, weak convergence\\
$2010$ \textit{Mathematics Subject Classification:} 47H10; 47H09.
\section{Introduction}
Let ${\cal H}$ be a real Hilbert space with norm $\|\cdot\|$ and inner product $\langle \cdot,\cdot\rangle.$ Let $D\subset {\cal H}$ be a closed convex set, and consider the self-mapping $G:D\to D.$  Throughout this paper the set of all fixed point of G in $D$ is denoted by
\[
Fix(G)=\{u\in D~:~Gu=u\}.
\]
The mapping $G$ is said to be
\begin{enumerate}
  \item [(a)] \emph{nonexpansive} if
  \begin{equation}\label{ne}
    \|Gu-Gv\|\leq\|u-v\|,\quad \text{for all}~u,v\in D;
  \end{equation}
  \item [(b)] \emph{quasi-nonexpansive} if $Fix(G)\neq \emptyset$ and
  \begin{equation}\label{qne}
    \|Gu-v\|\leq \|u-v\|,\quad \text{for all}~u\in D~\text{and}~v\in Fix(G);
  \end{equation}
  \item [(c)] \emph{$\beta$-demicontractive} if $Fix(G)\neq \emptyset$ and there exists a positive number $\beta<1$ such that
  \begin{equation}\label{dc}
    \|Gu-v\|^2\leq \|u-v\|^2+\beta\|u-Gu\|^2,
  \end{equation}
  for all $u\in D$ and $v\in Fix(G)$.
\end{enumerate}
By the previous definitions, it is obvious that any nonexpansive mapping $G$ with $Fix(G)\neq \emptyset$ is demicontractive and that any quasi-nonexpansive mapping is demicontractive, too, but the reverses are no more true, as illustrated by the next example.

\begin{example} [\cite{Berinde2024}]\label{ex1}
Let ${\cal H}$ be the real line with the usual norm and $D=[0,1].$ Define $F$ on $D$ as
\begin{equation}\label{Ex1}
F(u)=\begin{cases}
  7/8, & \mbox{if } 0\leq u<1 \\
  1/4, & \text{if}~u=1.
\end{cases}
 \end{equation}
Then $F$ is demicontractive but $F$ is neither quasi-nonexpasive nor nonexpansive.
\end{example}

Let ${\cal H}_1$ and ${\cal H}_2$ be two real Hilbert spaces and $F: {\cal H}_1\to {\cal H}_1,~ G : {\cal H}_2\to {\cal H}_2$ be two nonlinear
mappings with $Fix(F)\neq\emptyset$ and $Fix(G)\neq\emptyset.$ The split common fixed point problem for $F$ and $G$ is to
\begin{equation}\label{sfp}
\text{find an element}~ u \in Fix(F)~ \text{such that}~ Au\in Fix(G).
\end{equation}
where $A : {\cal H}_1 \to {\cal H}_2$ is a bounded linear operator. We denote the set of all solutions of Problem (\ref{sfp}) by
\[
\Gamma=\{u\in Fix(F): Au\in Fix(G)\}.
\]
This problem was first introduced by Censor and Segal \cite{Cencor} in 2009. It was mentioned in \cite{Cencor} that the split common fixed point problem is a generalization of the split and the convex feasibility problems that are very useful in image recovery, convex optimization and other areas of applied mathematics. Some convergence theorems for the split common fixed point problem have been analyzed, see \cite{LiHe, Moudafi} and the references therein. For instance, Moudafi \cite{Moudafi} proved weak convergence of the following iteration method for two quasi-nonexpasive mappings $F$ and $G,$ assuming that $F-I$ and $G-I$ are demiclosed at zero.

Let $u_1\in{\cal H}_1.$ For all $p\in\mathbb{N},$ define
\begin{equation}\label{eq-1}
\begin{cases}
y_p=u_p+\gamma\mu A^*(G-I)Au_p,\\
u_{p+1}=(1-\alpha_p)y_p+\alpha_p F(y_p),
\end{cases}
\end{equation}
where $\mu\in(0,1),~\{\alpha_p\}\subset(\delta,1-\delta)$ for a small enough $\delta>0$ and $\gamma\in(0,\frac{1}{\lambda\mu}),$ with $\lambda$ being the spectral radius of $A^*A.$ Here $A^*$ denotes the adjoint of $A.$

It was established in Moudafi \cite{Moudafi} that the sequence $\{u_p\}$ given by \eqref{eq-1} converges weakly to a split common fixed point of $F$ and $G$.

A few years later, Li and He \cite{LiHe} introduced a new iteration scheme with certain conditions that was shown to converge strongly to a split common fixed point of two quasi-nonexpansive mappings.

In this paper, our aim is to solve the split common fixed point problem in the setting  Hilbert spaces for the case of the larger class of demicontractive mappings, thus extending the main results in Li and He \cite{LiHe}. Our results are obtained by considering new averaged iterative algorithms and prove weak and strong convergence theorems.


\section{Preliminaries}

We recall some important lemmas used in the proofs of our main results. The following two lemmas are due to Berinde \cite{Berinde2023}.

\begin{lemma} {\rm\cite{Berinde2023}} \label{L1}
Let ${\cal H}$ be a real Hilbert space and $D\subset {\cal H}$ a closed and convex set. If $G:D\to D$ is $\beta$-demicontractive, then the Krasnoselskij perturbation $G_{\nu}=(1-\nu)I+\nu G$ of $G$ is $(1+\beta/\nu-1/\nu)$-demicontractive.
\end{lemma}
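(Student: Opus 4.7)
The plan is to carry out a direct algebraic verification of the demicontractivity inequality for $G_\nu$, exploiting the convex combination structure. The first observation is that the fixed point sets coincide, i.e. $\mathrm{Fix}(G_\nu)=\mathrm{Fix}(G)$ whenever $\nu\ne 0$, since $(1-\nu)u+\nu Gu=u$ is equivalent to $Gu=u$. Thus $\mathrm{Fix}(G_\nu)\ne\emptyset$ and we may fix $v\in\mathrm{Fix}(G)$ and $u\in D$ and aim at bounding $\|G_\nu u-v\|^2$.

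The next step is to expand $G_\nu u-v=(1-\nu)(u-v)+\nu(Gu-v)$ and compute
\begin{equation*}
\|G_\nu u-v\|^2=(1-\nu)^2\|u-v\|^2+2\nu(1-\nu)\langle u-v,Gu-v\rangle+\nu^2\|Gu-v\|^2.
\end{equation*}
Applying the polarization identity $2\langle u-v,Gu-v\rangle=\|u-v\|^2+\|Gu-v\|^2-\|u-Gu\|^2$ and collecting terms yields the cleaner expression
\begin{equation*}
\|G_\nu u-v\|^2=(1-\nu)\|u-v\|^2+\nu\|Gu-v\|^2-\nu(1-\nu)\|u-Gu\|^2,
\end{equation*}
which is really the standard convex combination identity in a Hilbert space.

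I would then invoke the hypothesis, the $\beta$-demicontractivity of $G$, namely $\|Gu-v\|^2\le\|u-v\|^2+\beta\|u-Gu\|^2$, substitute this bound, and simplify to obtain
\begin{equation*}
\|G_\nu u-v\|^2\le\|u-v\|^2+\nu(\beta+\nu-1)\|u-Gu\|^2.
\end{equation*}
To finish I would use the elementary identity $u-G_\nu u=\nu(u-Gu)$, so that $\|u-Gu\|^2=\nu^{-2}\|u-G_\nu u\|^2$, and substitute back. This converts the coefficient $\nu(\beta+\nu-1)$ into $(\beta+\nu-1)/\nu=1+\beta/\nu-1/\nu$, which is exactly the claimed demicontractivity constant.

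I do not expect a real obstacle in this proof; the only point of care is the bookkeeping of coefficients when substituting $\|u-Gu\|$ by $\nu^{-1}\|u-G_\nu u\|$, and the implicit assumption $\nu\ne 0$, which is understood in the Krasnoselskij-type perturbation.
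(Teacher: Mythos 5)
Your argument is correct: the convex-combination identity $\|(1-\nu)a+\nu b\|^2=(1-\nu)\|a\|^2+\nu\|b\|^2-\nu(1-\nu)\|a-b\|^2$ with $a=u-v$, $b=Gu-v$, followed by the demicontractivity bound and the substitution $u-G_\nu u=\nu(u-Gu)$, gives exactly the constant $1+\beta/\nu-1/\nu$, and this is the standard proof; the paper itself only cites the lemma from \cite{Berinde2023} without reproducing a proof, so there is nothing to diverge from. The only cosmetic point is that for $\nu\le 1-\beta$ the resulting constant is nonpositive, so it is not a ``positive number $\beta<1$'' in the strict sense of the paper's Definition (c); this is intended (it is precisely what makes Lemma \ref{L2} work) and does not affect your computation.
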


\begin{lemma} {\rm\cite{Berinde2023}} \label{L2}
Let ${\cal H}$ be a real Hilbert space and $D\subset {\cal H}$ a closed and convex set. If $G:D\to D$ is $\beta$-demicontractive, then for any $\nu\in(0,1-\beta)$ $$G_{\nu}=(1-\nu)I+\nu G $$ is quasi-nonexpansive.
\end{lemma}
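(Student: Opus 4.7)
The plan is to derive the statement as a clean corollary of Lemma \ref{L1}, with the direct Hilbert-space identity as a back-up route in case one wants a self-contained argument.

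First I would verify the fixed-point sets agree: since $\nu>0$, $G_\nu v=v$ reduces to $\nu(Gv-v)=0$, so $Fix(G_\nu)=Fix(G)\neq\emptyset$. This is important because quasi-nonexpansiveness is only defined when the fixed point set is nonempty.

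Next, I would apply Lemma \ref{L1} to $G_\nu$, which says that $G_\nu$ is $\beta'$-demicontractive with
\[
\beta' = 1 + \frac{\beta}{\nu} - \frac{1}{\nu} = 1 - \frac{1-\beta}{\nu}.
\]
The hypothesis $\nu\in(0,1-\beta)$ gives $(1-\beta)/\nu>1$, hence $\beta'<0$. Plugging this into the demicontractivity inequality
\[
\|G_\nu u - v\|^2 \le \|u-v\|^2 + \beta'\|u-G_\nu u\|^2,
\]
and dropping the nonpositive term $\beta'\|u-G_\nu u\|^2\le 0$, yields $\|G_\nu u-v\|\le\|u-v\|$ for every $u\in D$ and $v\in Fix(G_\nu)=Fix(G)$, which is exactly quasi-nonexpansiveness.

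There is essentially no obstacle once Lemma \ref{L1} is in hand; the only subtlety is the sharpness of the range $\nu\in(0,1-\beta)$, and this is exactly what forces $\beta'<0$. As a sanity check one can also verify the conclusion directly from the identity
\[
\|G_\nu u-v\|^2=(1-\nu)\|u-v\|^2+\nu\|Gu-v\|^2-\nu(1-\nu)\|u-Gu\|^2,
\]
which, after invoking the $\beta$-demicontractivity bound $\|Gu-v\|^2\le\|u-v\|^2+\beta\|u-Gu\|^2$, collapses to
\[
\|G_\nu u-v\|^2 \le \|u-v\|^2 - \nu\bigl((1-\beta)-\nu\bigr)\|u-Gu\|^2,
\]
and the coefficient $\nu((1-\beta)-\nu)$ is positive precisely on $\nu\in(0,1-\beta)$. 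Either route works; the Lemma \ref{L1} route is shorter and keeps the paper's internal structure transparent.
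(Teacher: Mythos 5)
Your proof is correct. The paper itself states this lemma without proof (it is quoted from the reference [Berinde 2023], as is Lemma \ref{L1} immediately preceding it), and your derivation of Lemma \ref{L2} as a corollary of Lemma \ref{L1} is exactly the linkage the juxtaposition of the two lemmas is meant to suggest; your back-up identity
$\|G_\nu u-v\|^2=(1-\nu)\|u-v\|^2+\nu\|Gu-v\|^2-\nu(1-\nu)\|u-Gu\|^2$
and the resulting bound $\|G_\nu u-v\|^2\le\|u-v\|^2-\nu\bigl((1-\beta)-\nu\bigr)\|u-Gu\|^2$ are also computed correctly, as is the observation that $Fix(G_\nu)=Fix(G)$. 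The only cosmetic point worth noting is that the paper's Definition of ``$\beta$-demicontractive'' formally requires a \emph{positive} constant $\beta<1$, so reading Lemma \ref{L1} with $\beta'=1-\frac{1-\beta}{\nu}<0$ stretches that definition slightly; the inequality is of course still valid with a negative coefficient, and dropping the nonpositive term is exactly what yields quasi-nonexpansiveness, so this does not affect the argument.
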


\begin{lemma} \label{L2a}
Let ${\cal H}$ be a real Hilbert space, $D\subset {\cal H}$ a closed and convex set and  $F:D\to D$ a mapping. Then, for any $\nu\in(0,1),$ we have $Fix(F_{\nu})=Fix(F).$
\end{lemma}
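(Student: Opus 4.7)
The plan is to prove the equality $Fix(F_\nu) = Fix(F)$ by showing both inclusions via direct algebraic manipulation, exploiting the definition $F_\nu u = (1-\nu)u + \nu F u$ together with the hypothesis $\nu \in (0,1)$, which guarantees $\nu \neq 0$.

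For the inclusion $Fix(F) \subseteq Fix(F_\nu)$, I would take $u \in Fix(F)$, so that $Fu = u$, and substitute into the definition of $F_\nu$ to get $F_\nu u = (1-\nu)u + \nu u = u$. This direction does not use $\nu \neq 0$ in any essential way; it is valid for every $\nu$.

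For the reverse inclusion $Fix(F_\nu) \subseteq Fix(F)$, I would take $u \in Fix(F_\nu)$, so that $(1-\nu)u + \nu F u = u$. Rearranging gives $\nu(Fu - u) = 0$, and since $\nu > 0$ (which is where the assumption $\nu \in (0,1)$ enters), this forces $Fu = u$, i.e., $u \in Fix(F)$.

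There is essentially no technical obstacle in this lemma; it is a purely algebraic two-line verification. The only thing to be slightly careful about is recording why the assumption $\nu \in (0,1)$ is needed: the upper bound $\nu < 1$ is not actually required for the set equality (only $\nu \neq 0$ is used), so the statement is in fact valid on the broader range $\nu \in (0,1]$, but restricting to $(0,1)$ keeps the convention consistent with Lemmas \ref{L1} and \ref{L2} where $F_\nu$ is a genuine Krasnoselskij perturbation.
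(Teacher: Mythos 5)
Your proof is correct and is exactly the standard two-line algebraic verification; the paper itself states Lemma \ref{L2a} without proof, treating it as immediate, and your argument (including the observation that only $\nu\neq 0$ is really needed) is the intended one.
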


The next lemma is known as Opial's lemma.
\begin{lemma} [\cite{Opial}] \label{Opial}
Let ${\cal H}$ be a Hilbert space and $\{u_p\}\subset{\cal H}$ a sequence such that there exists a nonempty set $C\subset {\cal H}$ satisfying the following.
\begin{enumerate}
  \item [(a)] $\lim_{p\to\infty}\|u_p-v\|$ exists $\forall v\in C.$
  \item [(b)] Any weak-cluster point of the sequence $\{u_p\}$ belongs to $C.$
\end{enumerate}
Then there exists $\bar{u}\in C$ such that $\{u_p\}$ weakly converges to $\bar{u}.$
\end{lemma}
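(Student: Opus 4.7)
The plan is to proceed in three stages. First, I would show that $\{u_p\}$ is bounded: since $C$ is nonempty, pick any $v_0 \in C$; by hypothesis (a), $\{\|u_p - v_0\|\}$ converges, hence is bounded, and so $\{u_p\}$ is bounded in $\mathcal{H}$. Because $\mathcal{H}$ is a Hilbert space and therefore reflexive, every bounded sequence admits a weakly convergent subsequence, so the set of weak cluster points of $\{u_p\}$ is nonempty; by (b), it is contained in $C$.

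The heart of the argument is the uniqueness of the weak cluster point, which I would establish via the classical Opial identity. Suppose $\bar{u}_1, \bar{u}_2 \in C$ are two weak cluster points, realized as weak limits of subsequences $\{u_{p_k}\}$ and $\{u_{q_k}\}$ respectively. By (a), the limits $L_i := \lim_{p\to\infty}\|u_p - \bar{u}_i\|^2$ exist for $i=1,2$. Expanding
\[
\|u_p - \bar{u}_1\|^2 = \|u_p - \bar{u}_2\|^2 + 2\langle u_p - \bar{u}_2,\, \bar{u}_2 - \bar{u}_1\rangle + \|\bar{u}_2 - \bar{u}_1\|^2
\]
and letting $p \to \infty$ along $p_k$, weak convergence gives $\langle u_{p_k} - \bar{u}_2,\, \bar{u}_2 - \bar{u}_1\rangle \to -\|\bar{u}_1 - \bar{u}_2\|^2$, so $L_1 = L_2 - \|\bar{u}_1 - \bar{u}_2\|^2$. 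Interchanging the roles of $\bar{u}_1$ and $\bar{u}_2$ and passing to the limit along $q_k$ yields $L_2 = L_1 - \|\bar{u}_1 - \bar{u}_2\|^2$. Adding the two relations forces $\|\bar{u}_1 - \bar{u}_2\|^2 = 0$, that is, $\bar{u}_1 = \bar{u}_2$.

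To conclude, a bounded sequence in a Hilbert space with a unique weak cluster point converges weakly to that point: otherwise one could extract a subsequence avoiding some weak neighborhood of the point, and by boundedness a further weakly convergent sub-subsequence would exist whose limit lies in $C$ by (b) yet differs from the unique cluster point, a contradiction. Thus $\{u_p\}$ weakly converges to some $\bar{u} \in C$, as required. The main obstacle is the uniqueness step, where the metric hypothesis (a) must be coupled with the weak-topology hypothesis (b) through the inner-product expansion; the remaining ingredients are routine consequences of reflexivity and weak sequential compactness of bounded sets.
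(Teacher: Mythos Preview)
Your argument is correct and is in fact the standard proof of Opial's lemma: boundedness from (a), existence of weak cluster points by reflexivity, uniqueness via the parallelogram-type expansion combined with (a) and (b), and then the routine subsequence argument to pass from a unique cluster point to full weak convergence. The computation in the uniqueness step checks out; passing to the limit along $p_k$ in the displayed identity indeed gives $L_1 = L_2 - \|\bar u_1 - \bar u_2\|^2$, and the symmetric relation follows the same way.

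Note, however, that the paper does not supply its own proof of this lemma: it is quoted from \cite{Opial} as a preliminary result and used as a black box in the proof of Theorem~\ref{MT1}. So there is no paper-proof to compare against; your write-up simply fills in what the paper leaves to the reference, and does so along the classical lines one finds in Opial's original work.
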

The next lemma is due to Li and He \cite{LiHe}.
\begin{lemma}{\rm\cite{LiHe}}
Let $F_1,\cdots,F_n:{\cal H}_1\to {\cal H}_1$ be quasi-nonexpansive mappings and set $T=\sum_{i=1}^{n}b_iF_{a_i},$ where $b_i\in(0,1)$ with $\sum_{i=1}^{n}b_i=1,$ and $F_{a_i}=(1-a_i)I+a_iF_i$ with $a_i\in(0,1), i=1,2,\cdots,n.$ Then $T$ is quasi-nonexpansive and $$Fix(T)=\bigcap_{i=1}^{n}Fix(F_i)=\bigcap_{i=1}^{n}Fix(F_{a_i}).$$
\label{LF}
\end{lemma}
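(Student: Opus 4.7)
The plan is to unpack the definition of $T$ and handle the three claims in turn: that the fixed-point sets coincide and that $T$ itself is quasi-nonexpansive. I will treat the fixed-point equality first, since the quasi-nonexpansivity of $T$ becomes immediate once I know the common fixed-point set.

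First, I would record two easy preliminaries. By Lemma~\ref{L2a} (applied with $\nu=a_i$), $Fix(F_{a_i})=Fix(F_i)$ for each $i$, so the second equality $\bigcap_i Fix(F_i)=\bigcap_i Fix(F_{a_i})$ is free. Each $F_{a_i}$ is quasi-nonexpansive because it is a convex combination of the identity and the quasi-nonexpansive mapping $F_i$ (both of which satisfy the quasi-nonexpansive inequality on the common point $v\in Fix(F_i)=Fix(F_{a_i})$). The easy inclusion $\bigcap_i Fix(F_{a_i})\subseteq Fix(T)$ is immediate from $Tv=\sum_i b_i F_{a_i}v=\sum_i b_i v=v$.

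The main step is the reverse inclusion $Fix(T)\subseteq\bigcap_i Fix(F_{a_i})$. Here I would fix $v\in Fix(T)$ and $p\in\bigcap_i Fix(F_{a_i})$ (assumed nonempty, as is standard for this sort of statement), set $x_i:=F_{a_i}v-p$, and invoke the Hilbert-space identity
\begin{equation*}
\Bigl\|\sum_{i=1}^{n}b_i x_i\Bigr\|^{2}=\sum_{i=1}^{n}b_i\|x_i\|^{2}-\sum_{1\le i<j\le n}b_ib_j\|x_i-x_j\|^{2},
\end{equation*}
valid whenever $b_i\ge 0$ and $\sum b_i=1$. Since each $F_{a_i}$ is quasi-nonexpansive, $\|x_i\|\le\|v-p\|$, and since $Tv=v$, the left-hand side equals $\|v-p\|^{2}$. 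Comparing gives $\sum_{i<j}b_ib_j\|F_{a_i}v-F_{a_j}v\|^{2}\le 0$, hence $F_{a_i}v=F_{a_j}v$ for all $i,j$ (using $b_ib_j>0$). Call this common vector $w$; then $v=Tv=\sum_i b_i w=w$, so $F_{a_i}v=v$ for every $i$, which is exactly $v\in\bigcap_i Fix(F_{a_i})$.

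Once the fixed-point set of $T$ is identified with $\bigcap_i Fix(F_{a_i})$, the quasi-nonexpansivity of $T$ follows by a one-line triangle-inequality estimate: for any $u\in {\cal H}_1$ and any $v\in Fix(T)$, one has $v\in Fix(F_{a_i})$ for every $i$, and hence
\begin{equation*}
\|Tu-v\|=\Bigl\|\sum_{i=1}^{n}b_i\bigl(F_{a_i}u-v\bigr)\Bigr\|\le\sum_{i=1}^{n}b_i\|F_{a_i}u-v\|\le\sum_{i=1}^{n}b_i\|u-v\|=\|u-v\|.
\end{equation*}
I expect the only real obstacle to be the reverse fixed-point inclusion; the convex-combination identity is the right tool to turn ``average equals maximum norm'' into ``all the summands are equal'', which is precisely what is needed to conclude $F_{a_i}v=v$ from $Tv=v$.
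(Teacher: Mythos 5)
Your proof is correct. Note that the paper does not prove this lemma at all --- it is quoted from Li and He \cite{LiHe} --- so there is no in-paper argument to compare against; I will compare with the standard argument in that literature instead. Your three easy steps (the identity $Fix(F_{a_i})=Fix(F_i)$ via Lemma \ref{L2a}, the quasi-nonexpansivity of each $F_{a_i}$ and then of $T$ by convexity, and the trivial inclusion $\bigcap_i Fix(F_{a_i})\subseteq Fix(T)$) are all fine, and your treatment of the hard inclusion via the strong-convexity identity $\|\sum_i b_i x_i\|^2=\sum_i b_i\|x_i\|^2-\sum_{i<j}b_ib_j\|x_i-x_j\|^2$ is sound: it correctly turns equality in the averaged estimate into $F_{a_1}v=\cdots=F_{a_n}v$, whence $v=Tv$ forces each $F_{a_i}v=v$. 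The usual route in this literature gets the same inclusion more directly from Lemma \ref{L3}(3): for $v\in Fix(T)$ and $p$ a common fixed point,
\begin{equation*}
0=\langle v-Tv,\,v-p\rangle=\sum_{i=1}^{n}b_i\langle v-F_{a_i}v,\,v-p\rangle\geq\sum_{i=1}^{n}b_i\frac{a_i}{2}\|v-F_iv\|^{2}\geq 0,
\end{equation*}
which forces $F_iv=v$ for every $i$; your identity is essentially a self-contained proof of the same inner-product inequality, so the two arguments are of equal depth and both are acceptable. You were also right to flag that the lemma as printed omits the hypothesis $\bigcap_{i}Fix(F_i)\neq\emptyset$, which every proof of the hard inclusion needs (the average of the projections onto two disjoint parallel hyperplanes has fixed points even though the projections share none); that hypothesis does hold where the lemma is invoked, in Theorem \ref{MT3}.
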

\begin{lemma} {\rm\cite{Moudafi}}
Let $S$ be a quasi-nonexpansive mapping and set $S_a:=(1-a)I+aS$ for $a\in(0,1].$ The following properties hold for all $u\in{\cal H}$ and $v\in Fix(S).$
\begin{enumerate}
  \item [{\rm(1)}] $\langle u-Su,u-v\rangle\geq \frac{1}{2}\|u-Su\|^2$ and $\langle u-Su,v-Su\rangle\leq \frac{1}{2}\|u-Su\|$;
  \item [{\rm(2)}] $\|S_au-v\|^2\leq \|u-v\|^2-a(1-a)\|u-Su\|$;
  \item [{\rm(3)}] $\langle u-S_au,u-v\rangle\geq \frac{a}{2}\|u-Su\|^2.$
\end{enumerate}
\label{L3}
\end{lemma}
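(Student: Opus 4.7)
All three properties hinge on only two ingredients: the polarization identity in Hilbert space and the defining inequality $\|Su-v\|\le\|u-v\|$ for $v\in Fix(S)$ provided by quasi-nonexpansiveness. My plan is to prove (1) first, derive (2) from the convex-combination identity in Hilbert space, and then obtain (3) immediately from (1) after the observation $u-S_au=a(u-Su)$.

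For the first inequality in (1), I would apply the polarization identity
\[
\langle u-Su,u-v\rangle=\tfrac{1}{2}\bigl(\|u-Su\|^2+\|u-v\|^2-\|Su-v\|^2\bigr)
\]
and then discard the nonnegative term $\|u-v\|^2-\|Su-v\|^2\ge 0$, which is nonnegative precisely by quasi-nonexpansiveness at $v\in Fix(S)$. The companion inequality follows by writing $v-Su=(v-u)+(u-Su)$ and using what we just proved:
\[
\langle u-Su,v-Su\rangle=-\langle u-Su,u-v\rangle+\|u-Su\|^2\le \tfrac{1}{2}\|u-Su\|^2.
\]
(I read the bound $\tfrac{1}{2}\|u-Su\|$ in the statement as a typo for $\tfrac{1}{2}\|u-Su\|^2$, and likewise in (2).)

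For (2), I would write $S_au-v=(1-a)(u-v)+a(Su-v)$ and invoke the standard Hilbert-space identity $\|(1-a)x+ay\|^2=(1-a)\|x\|^2+a\|y\|^2-a(1-a)\|x-y\|^2$ with $x=u-v$ and $y=Su-v$, which produces
\[
\|S_au-v\|^2=(1-a)\|u-v\|^2+a\|Su-v\|^2-a(1-a)\|u-Su\|^2.
\]
Bounding $a\|Su-v\|^2\le a\|u-v\|^2$ by quasi-nonexpansiveness collapses the first two terms into $\|u-v\|^2$ and yields the stated estimate. For (3), the key algebraic observation is $u-S_au=u-(1-a)u-aSu=a(u-Su)$, after which $\langle u-S_au,u-v\rangle=a\langle u-Su,u-v\rangle\ge \tfrac{a}{2}\|u-Su\|^2$ by the first inequality in (1).

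There is essentially no technical obstacle: each item is a one-line identity combined with one application of quasi-nonexpansiveness. The only mild care required is recognising the typo $\|u-Su\|\leftrightarrow\|u-Su\|^2$ in the statement, and remembering to use the three-term convex combination identity (rather than mere convexity of $\|\cdot\|^2$) in (2), since the $-a(1-a)\|u-Su\|^2$ term is exactly what this identity provides.
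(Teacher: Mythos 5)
Your proof is correct. The paper itself offers no proof of this lemma --- it is simply quoted from Moudafi's paper --- so there is nothing to compare against, but your argument (polarization identity plus quasi-nonexpansiveness for (1), the convex-combination identity $\|(1-a)x+ay\|^2=(1-a)\|x\|^2+a\|y\|^2-a(1-a)\|x-y\|^2$ for (2), and the observation $u-S_au=a(u-Su)$ for (3)) is exactly the standard derivation found in the cited source, and you are right that the exponents missing from $\|u-Su\|$ in items (1) and (2) of the statement are typographical errors.
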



\section{Split Common Fixed Points for Demicontractive mappings}

In this section we state and prove our main results: a weak convergence theorem (Theorem \ref{MT1}) and a strong convergence theorem (Theorem \ref{MT2}) for the averaged type algorithm \eqref{Alg1} used to approximate a split common fixed point of two demicontractive mappings.

We also obtain a common fixed point result for two demicontrative mappings as a special case of the main convergence theorem (Theorem \ref{MT2}).
\begin{definition} {\rm\cite{Moudafi}}
A sequence $\{u_p\}$ is said to be Fej\'er-monotone with respect to $\Gamma$ if for every $v\in\Gamma,$
\[
\|u_{p+1}-v\|\leq \|u_p-v\|,\quad \forall p\in\mathbb{N}.
\]
\end{definition}

\begin{theorem}
Let ${\cal H}_1$ and ${\cal H}_2$ be two real Hilbert spaces. Given a bounded linear
operator $A : {\cal H}_1 \to {\cal H}_2,$ let $F: {\cal H}_1\to {\cal H}_1,~ G : {\cal H}_2\to {\cal H}_2$ be two $\beta$-demicontractive operators. 
Assume that $F-I$ and $G-I$ are demiclosed at zero. Let $\{u_p\}$ be a sequence generated by
\begin{equation}\label{Alg1}
\begin{cases}
  u_1\in{\cal H}_1, &\\
  y_p=u_p+\gamma\mu A^*(b(G-I))Au_p,\quad b\in(0,1-\beta), &\\
  u_{p+1}=(1-\alpha_p)y_p+\alpha_p F(y_p),~~\forall p\in\mathbb{N},
\end{cases}
\end{equation}
where $\mu\in(0,1),~\{\alpha_p\}\subset(\delta,1-\delta)$ for a small enough $\delta>0$
and $\gamma\in(0,\frac{1}{\lambda\mu})$ with $\lambda$ being the spectral radius of $A^*A$ and $A^*$ being the adjoint of $A.$ Then $\{u_p\}$ converges weakly to a split common fixed point $u^*\in\Gamma.$
\label{MT1}
\end{theorem}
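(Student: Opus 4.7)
The plan is to reduce the $y_p$-update to a Moudafi-type step via averaging of $G$, and to handle the $F$-update by direct demicontractive estimates. Writing $b(G-I) = G_b - I$ with $G_b = (1-b)I + bG$ and using $b \in (0,1-\beta)$, Lemma \ref{L2} yields that $G_b$ is quasi-nonexpansive and Lemma \ref{L2a} gives $Fix(G_b) = Fix(G)$, so $\Gamma$ is unchanged and $y_p = u_p + \gamma\mu A^*(G_b - I)Au_p$ is exactly the Moudafi step applied to the quasi-nonexpansive $G_b$.

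For the Fej\'er inequality, I would apply Lemma \ref{L3}(1) to $G_b$ at $Au_p$ with fixed point $Au^* \in Fix(G_b)$; combined with $\|A^*z\|^2 \le \lambda\|z\|^2$ and the stepsize bound $\gamma\mu\lambda < 1$, this yields
\begin{equation*}
\|y_p - u^*\|^2 \le \|u_p - u^*\|^2 - \gamma\mu b^2(1-\gamma\mu\lambda)\|(G-I)Au_p\|^2.
\end{equation*}
For the second substep I would not average $F$ through Lemma \ref{L2}, but would expand $\|(1-\alpha_p)(y_p-u^*) + \alpha_p(Fy_p - u^*)\|^2$ by the convex-combination identity and then use the $\beta$-demicontractive bound $\|Fy_p-u^*\|^2 \le \|y_p-u^*\|^2 + \beta\|y_p-Fy_p\|^2$ at $u^* \in Fix(F)$, giving
\begin{equation*}
\|u_{p+1} - u^*\|^2 \le \|y_p - u^*\|^2 - \alpha_p(1-\alpha_p - \beta)\|Fy_p - y_p\|^2,
\end{equation*}
which is Fej\'er provided $\alpha_p < 1-\beta$ (the operative content of the hypothesis "$\delta$ small enough"). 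Chaining the two inequalities produces Fej\'er monotonicity of $\{u_p\}$ with respect to $\Gamma$, so $\lim_p\|u_p-u^*\|$ exists for every $u^* \in \Gamma$; telescoping then yields $\|(G-I)Au_p\|\to 0$ and $\|Fy_p-y_p\|\to 0$, while the $y_p$-update immediately gives $\|y_p-u_p\|\to 0$.

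The endgame is Opial's lemma applied with $C = \Gamma$. For a weak cluster point $\bar u$ of $\{u_p\}$, the strong convergence $y_p - u_p \to 0$ makes $\bar u$ also a weak cluster point of $\{y_p\}$; demiclosedness of $F-I$ at $0$ then gives $\bar u \in Fix(F)$. Boundedness of $A$ makes $A\bar u$ a weak cluster point of $\{Au_p\}$, and demiclosedness of $G-I$ at $0$ gives $A\bar u \in Fix(G)$. Hence $\bar u \in \Gamma$ and Lemma \ref{Opial} delivers the announced weak convergence. The main obstacle is the Krasnoselskij estimate for the demicontractive $F$: the averaging machinery of Lemmas \ref{L2}--\ref{L2a} does not clean up the $F$-step without the extra requirement $\alpha_p < 1-\beta$, and the whole technical difference with Moudafi's argument is the coefficient $-\alpha_p(1-\alpha_p-\beta)$ in place of $-\alpha_p(1-\alpha_p)$, which is the precise price of passing from the quasi-nonexpansive to the demicontractive class.
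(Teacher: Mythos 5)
Your proposal is correct and reaches the same Fej\'er-monotonicity-plus-Opial endgame as the paper, but it treats the $F$-substep by a genuinely different device. The paper averages \emph{both} operators: it runs the entire Moudafi argument on the quasi-nonexpansive pair $(F_a,G_b)$ with $F_a=(1-a)I+aF$, $a\in(0,1-\beta)$, invoking Lemma \ref{L3}(2) for the second substep, and only at the very end reparametrizes $\alpha_p=at_p$ and uses $Fix(F_a)=Fix(F)$ (Lemma \ref{L2a}) to recast the iteration as \eqref{Alg1}; you instead keep $F$ un-averaged, expand $\|(1-\alpha_p)(y_p-u^*)+\alpha_p(Fy_p-u^*)\|^2$ by the convex-combination identity, and apply the demicontractive inequality directly, arriving at the coefficient $-\alpha_p(1-\alpha_p-\beta)$. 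Both computations yield a usable negative multiple of $\|Fy_p-y_p\|^2$ (the paper's, after substitution, is $-\alpha_p(a-\alpha_p)$ with $a<1-\beta$, so yours is in fact slightly sharper), and your treatment of the $G$-substep and the demiclosedness/Opial conclusion coincide with the paper's. The real merit of your route is that it makes the operative parameter restriction visible: the argument needs $\alpha_p$ bounded away from $1-\beta$, not merely $\{\alpha_p\}\subset(\delta,1-\delta)$ with $\delta$ small. The paper hides this in the substitution $\alpha_p=at_p$, after which $\alpha_p$ actually ranges in $(a\delta,a(1-\delta))\subset(0,1-\beta)$ --- its closing remark that $\{\alpha_p\}\subset(\delta,1-\delta)$ is thereby recovered is not accurate --- so your explicit condition $\alpha_p<1-\beta$ is the honest form of the hypothesis that both proofs really use.
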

\begin{proof}
Since $F$ and $G$ are $\beta$-demicontractive mappings, in view of Lemma \ref{L2} the averaged mappings
\begin{align*}
F_{a}=(1-a)I+a F \quad \text{and} \quad G_{b}=(1-b)I+b G
\end{align*}
are quasi-nonexpansive for $\{a,b\}\subset(0,1-\beta).$ Clearly, $F_{a}-I$ and $G_{b}-I$ are demiclosed at zero.

For any sequence $\{t_p\}\subset(\delta,1-\delta)$ with a small enough $\delta>0,$ consider the iteration $\{u_p\}$ generated by
\begin{equation*}
\begin{cases}
  u_1\in{\cal H}_1, &\\
  y_p=u_p+\gamma\mu A^*(G_b-I)Au_p, &\\
  u_{p+1}=(1-t_p)y_p+t_p F_{a}y_p,~~\forall p\in\mathbb{N}.
\end{cases}
\end{equation*}
Take $v\in\Gamma.$ By Property (2) in Lemma \ref{L3}, we obtain
\begin{equation}\label{EqN}
 \|u_{p+1}-v\|^2\leq \|y_k-v\|^2-t_p(1-t_p)\|G_b(y_p)-y_p\|^2.
\end{equation}
We also have
\begin{align*}
  \|y_p-v\|^2 &= \|u_p+\gamma\mu A^*(G_b-I)(Au_p)-v\|^2 \\
   & =\|u_p-v\|^2+\gamma^2\mu^2 \|(G_b-I)(Au_p)\|^2+2\gamma\mu\left\langle u_p-v,A^*(G_b-I)(Au_p)\right\rangle \\
   & =\|u_p-v\|^2+\gamma^2\mu^2 \left\langle(G_b-I)(Au_p),AA^*(G_b-I)(Au_p)\right\rangle\\
   &\quad +2\gamma\mu\left\langle u_p-v,A^*(G_b-I)(Au_p)\right\rangle.
\end{align*}
Let $\Delta:=2\gamma\mu\langle u_p-v, A^*(G_p-I)(Au_p)\rangle.$ Propery (1) of Lemma \ref{L3} yields
\begin{align*}
  \Delta &= 2\gamma\mu\langle A(u_p-v),(G_p-I)(Au_p)\rangle \\
  & =2\gamma\mu \langle A(u_p-v)+(G_p-I)(Au_p)-(G_p-I)(Au_p),(G_p-I)(Au_p)\rangle\\
  & =2\gamma\mu \left(\langle T(Au_p)-A(v),(G_p-I)(Au_p)\rangle-\|(G_p-I)(Au_p)\|^2\right)\\
  & \leq 2\gamma\mu\left(\frac{1}{2}\|(G_p-I)(Au_p)\|^2-\|(G_p-I)(Au_p)\|^2\right)\\
  & \leq -\gamma\mu\|(G_p-I)(Au_p)\|^2.
\end{align*}

From the definition of $\lambda,$ we have
\begin{align*}
  \gamma^2\mu^2 \langle(G_b-I)(Au_p),AA^*(G_b-I)(Au_p)\rangle & \leq \lambda\gamma^2\mu^2\langle(G_b-I)(Au_p),(G_b-I)(Au_p)\rangle \\
   & = \lambda \gamma^2\mu^2  \|(G_b-I)(Au_p)\|^2.
\end{align*}
From \ref{EqN}, we obtain
\begin{equation*}
\|u_{p+1}-v\|^2\leq \|u_p-v\|^2-\gamma\mu(1-\lambda\gamma\mu)\|(G_b-I)(Au_p)\|^2-t_p(1-t_p)\|F_a(y_p)-y_p\|^2.
\end{equation*}
Thus, $\|u_{p+1}-v\|\leq \|u_p-v\|,$ that is, the sequence $\{u_p\}$ is Fej\'er-monotone.

From the assumption that $\{\alpha_p\}\subset(\delta,1-\delta)$ and $\gamma\in(0,\frac{1}{\lambda\mu}),$ we have
\begin{equation}\label{Eqeq}
\|u_{p+1}-v\|^2\leq \|u_p-v\|^2-\gamma\mu(1-\lambda\gamma\mu)\|(G_b-I)(Au_p)\|^2-\delta^2\|F_a(y_p)-y_p\|^2
\end{equation}
for any $v\in\Gamma.$ This implies that the sequence $\{\|u_p-v\|\}$ is monotone decreasing, and thus converges to a positive number. Hence,
\begin{equation}\label{Eqeq2}
  \lim_{p\to\infty}\|(G_b-I)(Au_p)\|=0\quad \text{and}\quad \lim_{p\to\infty}\|F_a(y_p)-y_p\|=0.
\end{equation}
Fej\'er monotonicity of $\{u_p\}$ implies that the sequence $\{u_p\}$ is bounded. Let $u^*$ be a weak-cluster point of $\{u_p\}.$
Demiclosedness of $F_a-I$ and $G_b-I$ at zero, (\ref{Eqeq2}) and the weak convergence of a subsequence $\{u_{p_j}\}$ of $\{u_{p}\}$ to $u^*$ gives
\[
F_a(u^*)=u^*\quad \text{and}\quad G_b(Au^*)=Au^*.
\]
Thus, $u^*\in \{u\in Fix(F_a): Au\in Fix(G_b)\}$ and $Au^*\in Fix(G_b).$ Lemma \ref{Opial} implies that $\{u_p\}$
converges weakly to the split common fixed point $u^*.$

By Lemma \ref{L2a}, $Fix(F_a)=Fix(F),$  $Fix(G_b)=Fix(G)$ and
\[
(1-t_p)y_p+t_p F_{a}y_p=(1-a t_p)y_p+a t_p Fy_p.
\]
Since $G_b-I=b(G-I),$ putting $\alpha_p=a t_p,$ shows that $\{u_p\}$ generated by $(\ref{Alg1})$ converges weakly to a split common fixed point $u^*\in\Gamma.$
Note that since $0<a<1-\beta$ and $\{t_n\}\subset(\delta,1-\delta),$ it follows that $\{\alpha_p\}\subset (\delta,1-\delta).$
\end{proof}

\medskip

By considering a projection version of Algorithm \eqref{Alg1}, it is possible to obtain a strong convergence result as shown by the following theorem.

\begin{theorem} \label{MT2}
Let ${\cal H}_1$ and ${\cal H}_2$ be two real Hilbert spaces, $D$ a nonempty closed convex subset of ${\cal H}_1$ and $E$ a nonempty closed convex subset of ${\cal H}_2.$  Given a bounded linear operator $A:{\cal H}_1\to{\cal H}_2$, let $F: D\to {\cal H}_1,~ G : E\to {\cal H}_2$ be two $\beta$-demicontractive mappings
with $Fix(F)\neq\emptyset$ and $Fix(G)\neq\emptyset$ and both $F-I$ and $G-I$ are demiclosed at $0$. Let $\{u_n\}$ be a sequence generated in the following manner:
\begin{equation}\label{Alg2}
\begin{cases}
u_0\in D,\quad D_0=D, &\\
  z_p=P_D(u_p+\lambda A^*(b(G-I))Au_p),\quad b\in(0,1-\beta) &\\
  y_p=\alpha_p z_p+(1-\alpha_p)Fz_p,\quad \{\alpha_p\}\subset(0,\eta)\subset(0,1) &\\
  D_{p+1}=\{u\in D_p:\|y_p-u\|\leq \|z_p-u\|\leq\|u_p-u\|\}, & \\
  u_{p+1}=P_{D_{p+1}}(u_0), \qquad \forall p\in\mathbb{N}\cup \{0\},
\end{cases}
\end{equation}
where $P$ is a projection operator and $A^*$ denotes the adjoint of $A,$ $\lambda\in(0,1/\|A^*\|).$ If $\Gamma=\{u\in Fix(F): Au\in Fix(G)\}\neq \emptyset,$ then
\[
u_p\to u^*\in\Gamma ~\text{and}~ Au_p\to Au^*\in Fix(G).
\]
\end{theorem}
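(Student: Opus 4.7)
The plan is to reduce the theorem to the quasi-nonexpansive setting by passing to the averaged mappings $F_a=(1-a)I+aF$ and $G_b=(1-b)I+bG$, which by Lemmas \ref{L2} and \ref{L2a} are quasi-nonexpansive with $Fix(F_a)=Fix(F)$ and $Fix(G_b)=Fix(G)$. Since $b(G-I)=G_b-I$, the $G$-step of \eqref{Alg2} reads $z_p=P_D(u_p+\lambda A^*(G_b-I)Au_p)$, and the $F$-step $y_p=\alpha_p z_p+(1-\alpha_p)Fz_p$ can be rewritten as the Krasnoselskij step $y_p=(1-s_p)z_p+s_p F_a z_p$ with $s_p=(1-\alpha_p)/a\in(0,1)$ for a compatible choice of $a\in(0,1-\beta)$. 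The problem is thus recast as a CQ (shrinking projection) iteration in the spirit of Li and He that I will then run step by step.

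The first task is to show by induction on $p$ that each $D_p$ is closed, convex and contains $\Gamma$, so that $u_{p+1}=P_{D_{p+1}}(u_0)$ is well defined. Closedness and convexity are immediate because each of $\|y_p-u\|\le\|z_p-u\|$ and $\|z_p-u\|\le\|u_p-u\|$ is a half-space constraint in $u$. For $\Gamma\subset D_{p+1}$, fix $v\in\Gamma\subset D$. Replicating the estimate of Theorem \ref{MT1} with Lemma \ref{L3}(1) applied to $G_b$ and using that $P_D$ is nonexpansive with $v\in D$, I obtain
\[
\|z_p-v\|^2\le\|u_p-v\|^2-\lambda\bigl(1-\lambda\|A\|^2\bigr)\|(G_b-I)Au_p\|^2,
\]
and Lemma \ref{L3}(2) applied to $F_a$ gives
\[
\|y_p-v\|^2\le\|z_p-v\|^2-s_p(1-s_p)\|F_a z_p-z_p\|^2.
\]
Both estimates imply $\|y_p-v\|\le\|z_p-v\|\le\|u_p-v\|$, so $v\in D_{p+1}$.

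The convergence argument is then the standard shrinking-projection one. Nestedness $D_{p+1}\subseteq D_p$ makes $\{\|u_p-u_0\|\}$ nondecreasing; since $\Gamma\subset D_p$, it is bounded above by $\|P_\Gamma(u_0)-u_0\|$, hence convergent. The variational characterization of $P_{D_{p+1}}$ then forces $\|u_{p+1}-u_p\|^2\le\|u_{p+1}-u_0\|^2-\|u_p-u_0\|^2\to 0$, so $\{u_p\}$ is strongly Cauchy and $u_p\to u^*$. From $u_{p+1}\in D_{p+1}$, the inequalities $\|y_p-u_{p+1}\|\le\|z_p-u_{p+1}\|\le\|u_p-u_{p+1}\|\to 0$ give $z_p,y_p\to u^*$. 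Substituting $\|z_p-v\|\to\|u_p-v\|$ back into the quadratic estimates above forces $\|(G_b-I)Au_p\|\to 0$ and $\|F_a z_p-z_p\|\to 0$, which through $G_b-I=b(G-I)$ and $F_a-I=a(F-I)$ translate to $\|(G-I)Au_p\|\to 0$ and $\|(F-I)z_p\|\to 0$. Continuity of $A$ yields $Au_p\to Au^*$, and demiclosedness of $F-I$ and $G-I$ at $0$ then gives $u^*\in Fix(F)$ and $Au^*\in Fix(G)$, so $u^*\in\Gamma$ and $Au_p\to Au^*\in Fix(G)$. The main obstacle I anticipate is the parameter bookkeeping: $s_p=(1-\alpha_p)/a$ must lie in $(0,1)$ with $s_p(1-s_p)$ bounded away from $0$ to drive $\|F_a z_p-z_p\|\to 0$, which requires $a$ close to $1-\beta$ and $\eta$ compatibly chosen (equivalently, the direct demicontractive computation produces the term $(1-\alpha_p)(\alpha_p-\beta)\|z_p-Fz_p\|^2$, forcing $\alpha_p\ge\beta+\delta$). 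Once these thresholds are fixed, the identification $u^*=P_\Gamma(u_0)$ follows by passing to the limit in $\|u_p-u_0\|\le\|P_\Gamma(u_0)-u_0\|$.
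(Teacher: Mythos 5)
Your proposal follows essentially the same route as the paper: embed $F,G$ into quasi-nonexpansive averaged maps $F_a,G_b$ via Lemmas \ref{L2} and \ref{L2a}, rewrite the $F$-step as a Krasnoselskij step, establish $\|y_p-v\|\le\|z_p-v\|\le\|u_p-v\|$ for $v\in\Gamma$ so that $\Gamma\subset D_p$, and run the standard shrinking-projection argument with demiclosedness at the end. If anything, you are more careful than the paper on two points it glosses over --- explicitly deriving $\|F_az_p-z_p\|\to 0$ (needed for $u^*\in Fix(F)$) and flagging that $s_p=(1-\alpha_p)/a\in(0,1)$ imposes a genuine lower bound $\alpha_p>1-a>\beta$ not visible in the stated hypothesis $\{\alpha_p\}\subset(0,\eta)$ --- so the proposal is correct and matches the paper's method.
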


\begin{proof}
Since $F$ and $G$ are $\beta$-demicontractive mappings, it follows from Lemma \ref{L2} that the averaged mappings
\begin{align*}
F_{a}u=(1-a)u+a Fu \quad \text{and} \quad G_{b}u=(1-b)u+b Gu
\end{align*}
are quasi-nonexpansive for $\{a,b\}\subset (0,1-\beta).$ Clearly, $F_{a}-I$ and $G_{b}-I$ are also demiclosed at zero.

Now consider the sequence $\{u_p\}$ generated by \eqref{Alg2} 
It is straightforward to see that $D_p$ is closed and convex for any $p\in\mathbb{N}\cup \{0\}.$ Let $v\in\hat{\Gamma}=\{u\in Fix(F_a): Au\in Fix(G_b)\}.$ We have
\begin{align*}
  2\lambda & \langle u_p-v,A^*(G_{b}Au_p-Au_p)\rangle  \\
  = & ~2\lambda\langle A(u_p-v)+(G_{b}Au_p-Au_p)-(G_{b}Au_p-Au_p),G_{b}Au_p-Au_p\rangle \\
  = & ~2\lambda (\langle G_{b}Au_p-Av, G_{b}Au_p-Au_p\rangle-\|G_{b}Au_p-Au_p\|^2 \\
  = & ~2\lambda \left(\frac{1}{2}\|G_{b}Au_p-Av\|^2+\frac{1}{2}\|G_{b}Au_p-Au_p\|^2\right.\\
  &\qquad\left.-\frac{1}{2}\|Au_p-Av\|^2-\|G_{b}Au_p-Au_p\|^2\right) \\
  \leq & ~2\lambda \left(\frac{1}{2}\|G_{b}Au_p-Au_p\|^2-\|G_{b}Au_p-Au_p\|^2\|\right) \\
  = & -\lambda\|G_{b}Au_p-Au_p\|^2.
\end{align*}
Moreover,
\begin{align} \nonumber
  \|z_p-v\|^2&=\|P_D(u_p+\lambda A^*(G_{b}Au_p-P_D(v)\|^2 \\ \nonumber
  & \leq \|u_p+\lambda A^*(G_{b}Au_p-Au_p)-v\|^2 \\ \nonumber
   =& \|u_p-v\|^2+\|\lambda A^*(G_{b}Au_p-Au_p\|^2+2\lambda\langle u_p-v,A^*(G_{b}Au_p-Au_p)\rangle\\ \nonumber
  & \leq \|u_p-v\|^2+\lambda^2 \|A^*\|^2\|(G_{b}Au_p-Au_p\|^2-\lambda\|G_{b}Au_p-Au_p\|\\
  & = \|u_p-v\|^2+\lambda(1-\lambda\|A^*\|^2)\|(G_{b}Au_p-Au_p\|^2\| \label{Eqeq3}
\end{align}
Hence we have
\[
\|y_p-v\|\leq\|z_p-v\|\leq\|u_p-v\|
\]
This implies that $v\in D_p$ and $\hat{\Gamma}\subset D_p$ for each $p\in\mathbb{N}\cup\{0\}.$ For all $p\in\mathbb{N}$ we have
\[
\|u_{p+1}-u_0\|\leq \|v-u_0\|.
\]
Thus, $\{u_p\}$ is bounded and $\{\|u_p-u_0\|\}$ is nondecreasing. Therefore, $\{u_p\}$ is Cauchy. Let $u*=\lim_{p\to\infty}u_p.$ We obtain
\begin{align*}
  \|z_p-u_p\| &\leq 2\|u_{p+1}-u_p\|\to 0, \\
  \|y_p-u_p\| &\leq 2\|u_{p+1}-u_p\|\to 0, \\
  \|y_p-z_p\| & \leq \|y_p-u_p\|+\|u_p-z_p\|\to 0~ \text{as}~p\to\infty.
\end{align*}
Since $\lambda(1-\lambda\|A^*\|^2)>0,$ from (\ref{Eqeq3}) we obtain
\[
\|G_b Au_p-Au_p\|^2\leq \frac{1}{\lambda(1-\lambda\|A^*\|^2)}\|u_p-z_p\|\{\|u_p-v\|+\|z_n-v\|\}\to 0,
\]
which yields $Au^*\in Fix(G_b).$ Hence, $u^*\in\hat{\Gamma}$ and $\{u_p\}$ generated by (\ref{Alg2}) converges strongly to $u^*.$
By Lemma \ref{L2a}, for any $a,b\in(0,1),$ we have $Fix(F_{a})=Fix(F)$ and $Fix(G_{b})=Fix(G).$ Let $k_p=1-t_p,$ we have
\[
t_p z_p+(1-t_p)F_{a}z_p=(1-k_p) z_p+k_pF_{a}z_p=(1-a k_p) z_p+a k_p Fz_p.
\]
Since $G_b-I=b(G-I),$ putting $\alpha_p=1-a k_p$ shows that for the sequence $\{u_p\}$ generated by (\ref{Alg2}), we have
\[
u_p\to u^*\in\Gamma ~\text{and}~ Au_p\to Au^*\in Fix(G).
\]
Note that since $0<a,b<1-\beta$ and $\{t_p\}\subset(0,\eta)\subset(0,1),$ it follows that $$\{\alpha_p\}\subset(0,\eta)\subset(0,1).$$
\end{proof}

\medskip

In the following theorem, for $i=1,\cdots,n$ and $j=1,\cdots,k$ we denote
\[
\hat{F}_{i}=(1-a_i)I+a_i F_i \text{ and } \hat{G}_{j}=(1-b_j)I+b_j G_j, \quad a_i,b_j\in(0,1).
\]

\begin{theorem}
Let ${\cal H}_1$ and ${\cal H}_2$ be two real Hilbert spaces, $D$ a nonempty closed convex subset of ${\cal H}_1$ and $E$ a nonempty closed convex subset of ${\cal H}_2.$  Given a bounded linear operator $A:{\cal H}_1\to{\cal H}_2$, let $F_1,\cdots,F_n: D\to {\cal H}_1$ be $\beta$-demicontractive mappings with $\bigcap_{i=1}^n Fix(F_i)\neq\emptyset,$ and let $G_1,\cdots,G_k : {\cal H}_2\to {\cal H}_2$ be $\beta$-demicontractive mappings
with $\bigcap_{i=1}^k Fix(G_i)\neq\emptyset.$ Suppose $$(F_i-I),~~(i=1,\cdots,n) \text{ and } (G_i-I), ~~(i=1,\cdots,k)$$ are demiclosed at zero. Let $\{u_p\}$ be a sequence generated in the following manner:
\begin{equation}\label{Alg2a}
\begin{cases}
u_0\in D,\quad D_0=D, &\\
  z_p=P_D\left(u_p+\lambda A^*\left(\sum_{j=1}^{k}d_j{G}_{\varphi_j}-I\right)Au_p\right),\quad d_j,\varphi_j\in(0,1), \quad \sum_{j=1}^{k}d_j=1,&\\
  y_p=\alpha_p z_p+(1-\alpha_p)\sum_{i=1}^{n}c_i{F}_{\vartheta_i} z_p,\quad c_i,\vartheta_i\in(0,1),\quad \sum_{i=1}^{n}c_i=1,&\\
  D_{p+1}=\left\{u\in D_p:\|y_p-u\|\leq \|z_p-u\|\leq\|u_p-u\|\right\}, & \\
  u_{p+1}=P_{D_{p+1}}(u_0), \qquad \forall p\in\mathbb{N}\cup \{0\},
\end{cases}
\end{equation}
where $P$ is a projection operator and $A^*$ denotes the adjoint of $A,$ $$\lambda\in\left(0,\frac{1}{\|A^*\|^2}\right) \text{ and } \{\alpha_p\}\subset(0,\eta)\subset(0,1).$$ For $i=1,\cdots,n,$  $F_{\vartheta_i}:=(1-\vartheta_i)I+\vartheta_i\hat{F}_i$ and for $j=1,\cdots, k,$ let $G_{\varphi_j}=(1-\varphi_j)I+\varphi_j\hat{G}_j,$  and
$$\Omega=\left\{u\in\bigcap_{i=1}^{n}Fix(F_i): Au\in\bigcap_{j=1}^{k}Fix(G_j)\right\}.$$ Then the sequence $\{u_p\}$ converges strongly to $u^*\in\Omega.$
\label{MT3}
\end{theorem}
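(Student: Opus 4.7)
The plan is to reduce Theorem \ref{MT3} to Theorem \ref{MT2} by collapsing the two finite families into a single pair of quasi-nonexpansive mappings, and then invoking the shrinking-projection argument already used there. First, since each $F_i$ and $G_j$ is $\beta$-demicontractive, Lemma \ref{L2} (applied with $a_i,b_j\in(0,1-\beta)$) makes $\hat F_i$ and $\hat G_j$ quasi-nonexpansive, and Lemma \ref{L2a} gives $Fix(\hat F_i)=Fix(F_i)$, $Fix(\hat G_j)=Fix(G_j)$. A further averaging at levels $\vartheta_i,\varphi_j\in(0,1)$ produces $F_{\vartheta_i}$ and $G_{\varphi_j}$, which are still quasi-nonexpansive with the same fixed-point sets. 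Applying Lemma \ref{LF} then yields quasi-nonexpansive operators
\[
\tilde F=\sum_{i=1}^{n}c_i F_{\vartheta_i},\qquad \tilde G=\sum_{j=1}^{k}d_j G_{\varphi_j},
\]
together with the identifications $Fix(\tilde F)=\bigcap_{i=1}^{n}Fix(F_i)$ and $Fix(\tilde G)=\bigcap_{j=1}^{k}Fix(G_j)$. With this reformulation, Algorithm \eqref{Alg2a} becomes structurally identical to Algorithm \eqref{Alg2} applied to the quasi-nonexpansive pair $(\tilde F,\tilde G)$, and the set $\hat\Omega=\{u\in Fix(\tilde F): Au\in Fix(\tilde G)\}$ equals $\Omega$.

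Next, I would transcribe, essentially verbatim, the proof of Theorem \ref{MT2}: verify that each $D_p$ is closed and convex, show $\hat\Omega\subset D_p$ for all $p$ using the key estimate
\[
\|z_p-v\|^2\le \|u_p-v\|^2+\lambda(1-\lambda\|A^*\|^2)\|\tilde G Au_p-Au_p\|^2,
\]
derive that $\{u_p\}$ is Cauchy from monotonicity of $\{\|u_p-u_0\|\}$, and conclude $\|z_p-u_p\|,\|y_p-u_p\|\to 0$. Rearranging the same inequality and using $\lambda(1-\lambda\|A^*\|^2)>0$ forces $\|\tilde G Au_p-Au_p\|\to 0$, and a parallel computation on the $y_p$ update forces $\|\tilde F z_p-z_p\|\to 0$.

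The main obstacle is passing demiclosedness at zero through the convex combinations: the hypotheses give demiclosedness only for each individual $F_i-I$ and $G_j-I$, not for $\tilde F-I$ and $\tilde G-I$. I would handle this with the standard reduction via Jensen's inequality and Lemma \ref{L3}(2). For $v\in Fix(\tilde F)$,
\[
\|\tilde F u-v\|^2\le\sum_{i=1}^{n}c_i\|F_{\vartheta_i}u-v\|^2\le \|u-v\|^2-\sum_{i=1}^{n}c_i\vartheta_i(1-\vartheta_i)\|\hat F_i u-u\|^2,
\]
whence
\[
\sum_{i=1}^{n}c_i\vartheta_i(1-\vartheta_i)\|\hat F_i u_p-u_p\|^2\le \|\tilde F u_p-u_p\|\bigl(\|u_p-v\|+\|\tilde F u_p-v\|\bigr).
\]
Since $\{u_p\}$ is bounded and $\|\tilde F u_p-u_p\|\to 0$, the right-hand side tends to $0$, and since the coefficients $c_i\vartheta_i(1-\vartheta_i)$ are positive, each $\|\hat F_i u_p-u_p\|=a_i\|F_iu_p-u_p\|$ tends to $0$. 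The analogous argument applied with $\tilde G$ gives $\|G_j Au_p-Au_p\|\to 0$ for all $j$.

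Finally, if $u_{p_\ell}\rightharpoonup u^*$, demiclosedness of each $F_i-I$ at zero yields $u^*\in Fix(F_i)$ for every $i$, hence $u^*\in\bigcap_i Fix(F_i)=Fix(\tilde F)$; likewise $Au^*\in\bigcap_j Fix(G_j)$. Since the Cauchy property already forces $u_p\to u^*$ strongly in $\mathcal H_1$ and $Au_p\to Au^*$ in $\mathcal H_2$, we conclude $u^*\in\Omega$, which is the desired strong convergence. The one delicate point to watch is that $a_i,b_j$ must lie in $(0,1-\beta)$ for Lemma \ref{L2} to apply; the statement's $a_i,b_j\in(0,1)$ should be read with this implicit restriction.
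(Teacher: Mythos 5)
Your proposal follows exactly the paper's route: the paper's proof of Theorem \ref{MT3} simply sets $T_1=\sum_{i}c_iF_{\vartheta_i}$ and $T_2=\sum_{j}d_jG_{\varphi_j}$, invokes Lemma \ref{LF} for quasi-nonexpansiveness and the fixed-point identifications, asserts that $T_1-I$ and $T_2-I$ are demiclosed at zero, and defers everything else to the proof of Theorem \ref{MT2}. Your Jensen-type argument transferring $\|\tilde F u_p-u_p\|\to 0$ to each $\|F_i u_p-u_p\|\to 0$ supplies exactly the detail the paper dismisses as ``straightforward to verify,'' and your remark that $a_i,b_j$ must lie in $(0,1-\beta)$ for Lemma \ref{L2} to apply is a correct reading of a hypothesis the statement leaves implicit.
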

\begin{proof}
  Let $T_1=\sum_{i=1}^{n}c_i{F}_{\vartheta_i}, T_2=\sum_{j=1}^{k}d_jG_{\varphi_j}.$ 
Lemma \ref{LF} implies that $T_1$ and $T_2$ are quasi-nonexpansive. Furthermore, $$Fix(T_1)=\bigcap_{i=1}^{n}Fix(F_i)\neq\emptyset \text{ and } Fix(T_2)=\bigcap_{j=1}^{k}Fix(G_j)\neq\emptyset.$$ It is straightforward to verify that $T_1-I$ and $T_2-I$ are demiclosed at the origin. The rest of the proof is the same as the proof of Theorem \ref{MT2}.
\end{proof}

\medskip

As a direct consequence of Theorem \ref{MT2} we obtain strong convergence of the iteration to a common fixed point of $F$ and $G,$ considering ${\cal H}_1={\cal H}_2={\cal H}$ and $A$ as an identity mapping.
\begin{corollary}
Let ${\cal H}$ be a real Hilbert space, $D$ a nonempty closed convex subset of ${\cal H}.$ Let $F:D\to{\cal H}$ and $G:{\cal H}\to{\cal H}$ be two $\beta$-demicontractive mappings with $Fix(F)\cap Fix(G)\neq\emptyset.$ Assume that $F-I$ and $G-I$ are demiclosed at zero. Let $\{u_p\}$ be a sequence defined as follows:
\begin{equation*}
\begin{cases}
u_0\in D,\quad D_0=D, &\\
  z_p=P_D((1-\lambda)u_p+\lambda ((1-a)I+G)u_p),\quad a\in(0,1-\beta),~\lambda\in(0,1),&\\
  y_p=\alpha_p z_p+(1-\alpha_p)F z_p, \quad \{\alpha_p\}\subset(0,\eta)\subset(0,1),&\\
  D_{p+1}=\{u\in D_p:\|y_p-u\|\leq \|z_p-u\|\leq\|u_p-u\|\}, & \\
  u_{p+1}=P_{D_{p+1}}(u_0),\qquad \forall p\in\mathbb{N}\cup \{0\},
\end{cases}
\end{equation*}
where $P$ is a projection operator. Then $u_p\to u^*\in  Fix(F)\cap Fix(G).$
\end{corollary}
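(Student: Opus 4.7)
The plan is to derive this corollary as a direct specialization of Theorem \ref{MT2}. I would take $\mathcal{H}_1 = \mathcal{H}_2 = \mathcal{H}$, $E = \mathcal{H}$, and let $A = I$ be the identity operator. This is a bounded linear operator with $A^{*} = I$ and $\|A^{*}\| = 1$, so the admissible interval $(0, 1/\|A^{*}\|^2)$ appearing in Theorem \ref{MT2} collapses exactly to $(0,1)$, matching the constraint on $\lambda$ stated in the corollary.

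Next I would check that all remaining hypotheses transport correctly. The $\beta$-demicontractivity of $F$ and $G$ and the demiclosedness of $F - I$ and $G - I$ at zero are inherited verbatim. Moreover, because $Au = u$ for every $u$, the solution set simplifies to
\[
\Gamma = \{u \in Fix(F) : Au \in Fix(G)\} = Fix(F) \cap Fix(G),
\]
which is nonempty by assumption. So $\hat{\Gamma} \neq \emptyset$ as well, after passing through the averaged operators $F_a$ and $G_a$ guaranteed by Lemma \ref{L2}.

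The only step that needs care is matching the iterative scheme. With $A = I$, the $z_p$-step of Algorithm \eqref{Alg2} becomes
\[
z_p = P_D\bigl(u_p + \lambda\, b(G - I)u_p\bigr) = P_D\bigl((1-\lambda b)u_p + \lambda b\, G u_p\bigr),
\]
which, up to the obvious reparametrization of the averaging constant (both $a$ in the corollary and $b$ in Theorem \ref{MT2} are free in $(0, 1-\beta)$), coincides with the corollary's $z_p$-update. The $y_p$-step, the definition of $D_{p+1}$, and the projection $u_{p+1} = P_{D_{p+1}}(u_0)$ are syntactically identical to those of \eqref{Alg2}. Hence the sequence generated by the corollary's scheme is literally a sequence produced by Algorithm \eqref{Alg2}.

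Applying Theorem \ref{MT2} then yields $u_p \to u^{*} \in \Gamma = Fix(F) \cap Fix(G)$ and $Au_p = u_p \to Au^{*} = u^{*} \in Fix(G)$, which is exactly the claim. I do not anticipate any substantive obstacle; the argument is purely bookkeeping of parameters, which is why the statement is phrased as a corollary rather than an independent theorem.
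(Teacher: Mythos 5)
Your proposal is correct and matches the paper exactly: the paper offers no separate proof, stating only that the corollary follows from Theorem \ref{MT2} by taking ${\cal H}_1={\cal H}_2={\cal H}$ and $A=I$, which is precisely the specialization you carry out (including the harmless reparametrization of the averaging constant in the $z_p$-step).
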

The next theorem is a generalization of Theorem \ref{MT2} in which the operators $F$ and $G$ are demicontractive with different constants. Its proof is similar to that of Theorem \ref{MT2}.
\begin{theorem} \label{MT4}
Let ${\cal H}_1$ and ${\cal H}_2$ be two real Hilbert spaces, $D$ a nonempty closed convex subset of ${\cal H}_1$ and $E$ a nonempty closed convex subset of ${\cal H}_2.$  Given a bounded linear operator $A:{\cal H}_1\to{\cal H}_2$, let $F: D\to {\cal H}_1$ be an $\alpha$-demicontractive mapping and $G : E\to {\cal H}_2$ be a $\beta$-demicontractive mapping 
and both $F-I$ and $G-I$ are demiclosed at $0$. Let $\{u_n\}$ be a sequence generated in the following manner:
\begin{equation}\label{Alg4}
\begin{cases}
u_0\in D,\quad D_0=D, &\\
  z_p=P_D(u_p+\lambda A^*(b(G-I))Au_p),\quad b\in(0,1-\beta) &\\
  y_p=\alpha_p z_p+(1-\alpha_p)Fz_p,\quad \{\alpha_p\}\subset(0,\eta)\subset(0,1) &\\
  D_{p+1}=\{u\in D_p:\|y_p-u\|\leq \|z_p-u\|\leq\|u_p-u\|\}, & \\
  u_{p+1}=P_{D_{p+1}}(u_0), \qquad \forall p\in\mathbb{N}\cup \{0\},
\end{cases}
\end{equation}
where $P$ is a projection operator and $A^*$ denotes the adjoint of $A,$ $\lambda\in(0,1/\|A^*\|).$ If $\Gamma=\{u\in Fix(F): Au\in Fix(G)\}\neq \emptyset,$ then
\[
u_p\to u^*\in\Gamma ~\text{and}~ Au_p\to Au^*\in Fix(G).
\]
\end{theorem}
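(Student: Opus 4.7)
The plan is to mimic the proof of Theorem \ref{MT2}, the only difference being that the two demicontractivity constants now sit in two \emph{different} intervals. First I would apply Lemma \ref{L2} separately to each operator: since $F$ is $\alpha$-demicontractive, $F_a=(1-a)I+aF$ is quasi-nonexpansive for every $a\in(0,1-\alpha)$, and since $G$ is $\beta$-demicontractive, $G_b=(1-b)I+bG$ is quasi-nonexpansive for every $b\in(0,1-\beta)$. Because averaging with a nonzero weight preserves fixed points (Lemma \ref{L2a}), we have $Fix(F_a)=Fix(F)$ and $Fix(G_b)=Fix(G)$, so the auxiliary set
\[
\hat\Gamma=\{u\in Fix(F_a):Au\in Fix(G_b)\}
\]
coincides with $\Gamma$. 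Demiclosedness of $F_a-I$ and $G_b-I$ at zero follows from that of $F-I$ and $G-I$.

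Next I would rewrite the algorithm in the auxiliary form used in Theorem \ref{MT2}. Setting $k_p=1-\alpha_p$ and using the identity
\[
\alpha_p z_p+(1-\alpha_p)F z_p=(1-a k_p) z_p+a k_p F z_p= t_p z_p + (1-t_p)F_a z_p
\]
for $t_p=1-k_p/$-type rescaling (exactly the bookkeeping carried out at the end of the proof of Theorem \ref{MT2}), the iteration \eqref{Alg4} becomes the Theorem \ref{MT2} iteration in which $F$ is replaced by $F_a$ and the outer-mixing parameter is rescaled; the inner step is already of the form $z_p=P_D(u_p+\lambda A^*(G_b-I)Au_p)$ because $b(G-I)=G_b-I$.

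Then I would import, verbatim, the computational core of the proof of Theorem \ref{MT2}: (i) $D_p$ is closed and convex by induction; (ii) for any $v\in\hat\Gamma$ the cross-term bound
\[
2\lambda\langle u_p-v,\,A^*(G_b Au_p-Au_p)\rangle\le -\lambda\|G_b Au_p-Au_p\|^2
\]
combined with the spectral bound $\|A^*\|^2$ yields
\[
\|z_p-v\|^2\le\|u_p-v\|^2+\lambda(1-\lambda\|A^*\|^2)\|G_b Au_p-Au_p\|^2,
\]
from which $\|y_p-v\|\le\|z_p-v\|\le\|u_p-v\|$ and hence $\hat\Gamma\subset D_p$; (iii) the projection characterization $u_{p+1}=P_{D_{p+1}}(u_0)$ together with $\hat\Gamma\subset D_{p+1}$ implies $\|u_{p+1}-u_0\|\le\|v-u_0\|$, so $\{\|u_p-u_0\|\}$ is bounded and nondecreasing and $\{u_p\}$ is Cauchy; (iv) the inclusions $u_{p+1}\in D_{p+1}\subset D_p$ give $\|z_p-u_p\|,\|y_p-u_p\|,\|y_p-z_p\|\to 0$.

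The concluding step is to invoke demiclosedness: from the displayed estimate and $\lambda(1-\lambda\|A^*\|^2)>0$ we deduce $\|G_b Au_p-Au_p\|\to 0$, so any weak limit point $u^*$ of $\{u_p\}$ (here in fact a strong limit) satisfies $Au^*\in Fix(G_b)=Fix(G)$; analogously $F_a z_p-z_p\to 0$ forces $u^*\in Fix(F_a)=Fix(F)$. Hence $u^*\in\Gamma$ and $Au_p\to Au^*\in Fix(G)$. The main obstacle—really the only one beyond bookkeeping—is making sure that the two independent choices $a\in(0,1-\alpha)$ and $b\in(0,1-\beta)$ are compatible with the range of $\alpha_p$ prescribed in \eqref{Alg4}: one has to verify that after the rescaling $\alpha_p=1-ak_p$, the hypothesis $\{\alpha_p\}\subset(0,\eta)\subset(0,1)$ is still achievable, which it is because $a$ and $k_p$ can be chosen independently in $(0,1)$, so $ak_p\in(0,1)$ and thus $\alpha_p\in(0,1)$ as required.
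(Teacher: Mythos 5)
Your proposal is correct and matches the paper's approach exactly: the paper's entire proof of this theorem is the single remark that it is ``similar to that of Theorem \ref{MT2},'' and what you have written is precisely that adaptation --- apply Lemma \ref{L2} separately with $a\in(0,1-\alpha)$ for $F$ and $b\in(0,1-\beta)$ for $G$, identify $\hat\Gamma$ with $\Gamma$ via Lemma \ref{L2a}, and re-run the computational core of Theorem \ref{MT2}. The one loose end you flag --- verifying that the reparametrization $\alpha_p=1-ak_p$ lands in $(0,\eta)$ rather than merely in $(0,1)$ --- is treated with the same brevity in the paper's own proof of Theorem \ref{MT2}, so your argument is at the same level of completeness as the source.
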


\section{Conclusions}
1. We have proven a weak convergence theorem for an iteration scheme used to approximate split common fixed point of two demicontractive mappings in Hilbert spaces derived from an associated weak convergence theorem in the class of quasi-nonexpansive operators. 

2. We also have shown a strong convergence theorem for an iteration scheme used to approximate split common fixed point of two demicontractive mappings in Hilbert spaces derived from a corresponding strong convergence theorem in the class of quasi-nonexpansive operators. 

3. Our investigation is based on an embedding technique by means of an average mappings: if $F$ is $\beta$-demicontractive, then for any $a\in(0,1-\beta),$ $F_a=(1-a)I+a F$ is quasi-nonexpansive.

4. For other related works that allow similar developments, we refer to Kingkam and Nantadilok \cite{Kingkam}, Padcharoen et al. \cite{Pad}, Sharma and Chandok \cite{Sharma}, Shi et al. \cite{Shi}, Tiammee and Tiamme \cite{Tiam},...

\section*{Acknowledgements} The first draft of this paper was carried out during the first author's short visit (December 2023) at the Department of Mathematics, King Fahd University of Petroleum and Minerals, Dhahran, Saudi Arabia. He is grateful to Professor Monther Alfuraidan, the Chairman of Department of Mathematics, for the invitation and for providing excellent facilities during his visit.

\end{document}